\numberwithin{equation}{section}
\theoremstyle{plain}
\newtheorem{theorem}{Theorem}[section]
\newtheorem{corollary}[theorem]{Corollary}
\newtheorem{proposition}[theorem]{Proposition}
\theoremstyle{definition}
\newtheorem{definition}[theorem]{Definition}
\theoremstyle{remark}
\newtheorem{remark}[theorem]{Remark}
\newcommand{\lip}[1]{{\mathrm{lip}}({#1})}
\newcommand{\Ric}{{\rm{Ricci}}}
\newcommand{\bRic}{{\bf Ricci}}
\renewcommand{\H}{{\mathrm{H}}}
\newcommand{\Hess}{{\mathrm{Hess}}}
\newcommand{\mm}{\mathfrak m}
\newcommand{\ms}{(X,\d,\mm)}
\newcommand{\rcdkn}{{\rm RCD}^*(K, N)}
\newcommand{\rcd}{{\rm RCD}(K, \infty)}
\newcommand{\R}{\mathbb{R}}
\renewcommand{\d}{{\mathrm d}}
\newcommand{\D}{{\mathrm D}}
\newcommand{\restr}[1]{\lower3pt\hbox{$|_{#1}$}}
\newcommand{\la}{{\langle}}
\newcommand{\ra}{{\rangle}}
\newcommand{\nchi}{{\raise.3ex\hbox{$\chi$}}}
\newcommand{\lims}{\varlimsup}
\title{\large{{\bf Ricci tensor on smooth metric measure space with boundary}}
}
\begin{document}
\author{Bang-Xian Han \thanks
{ University of Bonn,  Institute for Applied Mathematics, han@iam.uni-bonn.de}
}

\date{\today} 
\maketitle
\begin{abstract}
The aim of this note is to study the measure-valued Ricci tensor on smooth metric measure space with boundary, which is a generalization of Bakry-\'Emery's modified Ricci tensor on weighted Riemannian manifold. As an application, we offer a new approach to study curvature-dimension condition of smooth metric measure space with boundary.
\end{abstract}

\textbf{Keywords}: metric measure space, curvature-dimension condition, boundary, Bakry-\'Emery theory.\\

\section{Introduction}
Let $M=(X, {\rm g}, e^{-V}{\rm Vol}_{\rm g})$ be a $n$-dimensional weighted Riemannian manifold (or smooth metric measure space) equipped with a metric tensor ${\rm g}: [TM]^2 \mapsto C^\infty(M)$.  The well-known Bakry-\'Emery's  Bochner  type formula
\begin{equation}\label{bf}
\Gamma_2(f)=\Ric(\nabla f, \nabla f)+\H_V(\nabla f, \nabla f)+| \H_f |_{\rm HS}^2,
\end{equation}
valid for any smooth function $f$, where $\H_V=\nabla^2 V$ is the Hessian of $V$ and $| \H_f |_{\rm HS}$ is the Hilbert-Schmidt norm of the Hessian $\H_f$. The operator $\Gamma_2$ is defined by
\[
\Gamma_2(f):=\frac12 L \Gamma(f,f) -\Gamma(f, Lf),\qquad\Gamma(f,f):=\frac12L(f^2)-fLf
\]
where $\Gamma(\cdot, \cdot)={\rm g}(\nabla \cdot, \nabla \cdot )$, and $L=\Delta-\nabla V$ is the Witten-Laplacian on $M$.  It is known that $\Gamma_2 \geq K$ could characterize many important geometric and analysis properties of $M$.

The aim of this paper is to study  the Bakry-\'Emery's $\Gamma_2$-calculus on smooth metric measure space with boundary. It can be seen that  smooth metric measure space with boundary is actually a non-smooth space,  since  the geodesics are not even $C^2$ in general (see e.g. \cite{AA-G}).  Therefore,  it will not be more difficult to study this problem in an abstract framework. In this paper,  we will use the theory of (non-smooth) metric measure space with lower Ricci curvature bound, which was founded by Lott-Sturm-Villani, and systematically studied using different techniques which originally come from differential geometry, metric geometry, probability theory, etc. 

We will see that the non-smooth Bochner inequality and the measure-valued Ricci tensor $\bRic$, which are introduced in \cite{S-S} and \cite{G-N} have  precise representations on weighted Riemannian manifold $(\Omega, \d_{\rm g}, e^{-V}{\rm Vol}_{\rm g})$ with boundary, where $\d_{\rm g}$ is the intrinsic distance on $\Omega\subset X$ induced by the Riemannian metric ${\rm g}$:
\begin{equation}\label{eq:intro-1}
\bRic_\Omega=\Ric_V\,e^{-V}\d {\rm Vol}_{\rm g}+II \,e^{-V}\d \mathcal{H}^{n-1}\restr{\partial \Omega}
\end{equation}
where $\Ric_V=\Ric+\H_V$ is the Bakry-\'Emery Ricci tensor and $II$ is the second fundamental form.

From  \cite{AGS-B, AGS-M} and \cite{G-N} we know that $(\Omega, \d_{\rm g}, {\rm Vol}_{\rm g})$ is a $\rcd$ space, or in other words, the Boltzman entropy is $K$-displacement convex, if and only if $ \bRic_\Omega \geq K$.   By \eqref{eq:intro-1} we know  $ \bRic_\Omega \geq K$ if and only if $\Ric \geq K$ and $II \geq 0$. Then we immediately know $(\Omega, \d_{\rm g})$ is locally convex if it is $\rcd$. Even though this result could also  be proved by combining  the result of Ambrosio-Gigli-Savar\'e (\cite{AGS-C, AGS-M}) and  Wang (see e.g. Chapter 3, \cite{WFY-A}). Our approach here is the first one totally `inside' the framework of metric measure space.

In this paper,  we will review the construction of measure-valued Ricci tensor and give a quick proof to our main formula. Then we end this note with some  simple applications. More applications and generalizations will be studied in the future.

\section{Measure valued Ricci tensor and application}
Let $M:=\ms$ be a complete, separable geodesic space.
We define the  local Lipschitz constant $\lip f:X\to[0,\infty]$  of a function $f$ by
\[
\lip f(x):=
\left\{ \begin{array}{ll}
\mathop{\lims}_{y\to x}\frac{|f(y)-f(x)|}{\d(x,y)},~~~~x~~\text{is not isolated}\\
0,~~~~~~~~~~~~~~~~~~~~~~~~\text{otherwise.}
\end{array}\right.
\]

We say that $f\in L^2(X, \mm)$ is a Sobolev function in $W^{1,2}(M)$ if there exists a sequence of Lipschitz functions functions $\{f_n\} \subset L^2$,  such that $f_n \to f$ and $\lip{f_n} \to G$ in $L^2$ for some $G \in L^2(X, \mm)$. It is known that there exists a minimal function $G$ in $\mm$-a.e. sense. We call this minimal $G$ the minimal weak  upper gradient (or weak gradient for simplicity) of the function $f$, and denote it by $|\D f|$.
It is known that the locality holds for $|\D f|$, i.e. $|\D f|=|\D g|$ $\mm$-a.e. on the set $\{x\in X:f(x)=g(x)\}$. If  $M$ is a Riemannian manifold, it is known that $|\D f|_M=|\nabla f|=\lip{f}$  for any $f\in C^\infty$. Furthermore, let  $\Omega \subset M$ be a domain such that  $\partial \Omega $ is  $(n-1)$-dimensional. Then we know $|\D f|_\Omega=|\nabla f|$ $\mm$-a.e. (see Theorem  6.1, \cite{C-D}). It can also be seen that the weighted measure $e^{-V}\mm$ does not change the value of weak gradients.

We equip $W^{1,2}\ms$ with the norm
\[
\|f\|^2_{W^{1,2}\ms}:=\|f\|^2_{L^2(X,\mm)}+\||\D f|\|^2_{L^2(X,\mm)}.
\]
It is known that $W^{1,2}(X)$ is a Banach space, but not necessarily a Hilbert space. We say that $\ms$ is an infinitesimally Hilbertian space if $W^{1,2}$ is a Hilbert space. Obviously, Riemannian manifolds (with or without boundary) are infinitesimally Hilbertian spaces.

On an infinitesimally Hilbertian space $M$, we have a natural pointwise bilinear map defined by
 \[
[W^{1,2}(M)]^2 \ni (f, g) \mapsto \la \nabla f, \nabla  g \ra:= \frac14 \Big{(}|\D (f+g)|^2-|\D (f-g)|^2\Big{)}.
\]Then we can define the Laplacian  by duality.

\begin{definition}
[Measure valued Laplacian, \cite{G-O, G-N}]
The space ${\rm D}({\bf \Delta}) \subset  W ^{1,2}(M)$ is the space of $f \in  W ^{1,2} (M)$ such that there is a measure ${\bf \mu}$ satisfying
\[
\int h \,\d{\bf \mu}= -\int \la \nabla h, \nabla f \ra \, \d  \mm, \forall h: M \mapsto  \R, ~~ \text{Lipschitz with bounded support}.
\]
In this case the measure $\mu$ is unique and we shall denote it by ${\bf \Delta} f$. If ${\bf \Delta} f \ll m$, we denote its density by $\Delta f$.
\end{definition}

We have the following proposition characterizing the curvature-dimensions conditions $\rcd$ and $\rcdkn$ through non-smooth Bakry-\'Emery theory. We say that a space is  $\rcd$/$\rcdkn$ if it is a  ${\rm CD}(K, \infty)$/${\rm CD}^*(K, N)$  space which are defined by  Lott-Sturm-Villani in \cite{Lott-Villani09, S-O1, S-O2} and 
Bacher-Sturm in \cite{BS-L}, equipped with an infinitesimally Hilbertian Sobolev space. For more details, see \cite{AGS-M} and \cite{AGMR-R}.

We define  ${\rm TestF}(M) \subset W^{1,2}(M)$, the set of test functions  by
\[
{\rm TestF}(M):= \Big\{f \in {\rm D} ({\bf \Delta}) \cap L^\infty: |\D f|\in L^\infty~~ {\rm and}~~~ \Delta f  \in W^{1,2}(M)\cap L^\infty(M) \Big \}.
\] It is known that ${\rm TestF}(M)$ is dense in $W^{1,2}(M)$ when $M$ is $\rcd$.

\bigskip

 Let $f,g \in {\rm TestF}(M)$. We know (see \cite{S-S}) that  the measure ${\bf \Gamma}_2(f,g)$ is well-defined by
\[
{\bf \Gamma}_2(f,g)=\frac12 {\bf \Delta} \la \nabla f, \nabla g \ra -\frac12 \big{(}\la \nabla f, \nabla \Delta g \ra+\la \nabla g, \nabla \Delta f \ra\big{)}\, \mm,
\]
and we put ${\bf \Gamma}_2(f):={\bf \Gamma}_2(f,f)$. 
Then we have the following Bochner inequality on metric measure space, which can be regarded as  variant definitions of $\rcd$ and $\rcdkn$ conditions.

\begin{proposition} [Bakry-\'Emery condition, \cite{AGS-B, AGS-M}, \cite{EKS-O}]\label{becondition}
Let $M=\ms$ be  an infinitesimally Hilbertian space satisfying Sobolev-to-Lipschitz property (see \cite{AGS-B} or \cite{GH-S}  for the definition). Then it is a $\rcdkn$ space with $K \in \R$ and $N \in [1, \infty]$ if and only if
\[
{\bf \Gamma}_2(f) \geq \Big {(} K |\D f|^2+ \frac1N (\Delta f)^2 \Big{)}\,\mm
\]
for any $f \in {\rm TestF}(M)$.
\end{proposition}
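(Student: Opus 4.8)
The plan is to establish the equivalence by exploiting the heat semigroup $P_t$ as the bridge between the two curvature-dimension pictures: on one side the Lagrangian definition of $\rcdkn$ as $(K,N)$-displacement convexity of the Boltzmann/R\'enyi entropies along $\ws$-geodesics, and on the other the Eulerian Bochner inequality stated above. There are two implications to prove, namely (i) $\rcdkn \Rightarrow$ Bochner and (ii) Bochner $\Rightarrow \rcdkn$; in both one uses that, in the infinitesimally Hilbertian setting, $P_t$ is simultaneously the $L^2$-gradient flow of the Cheeger energy and, under a lower curvature bound, the $\ws$-gradient flow of the entropy. For $N=\infty$ the dimensional term drops and one recovers the $\rcd$ case of Ambrosio--Gigli--Savar\'e.

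For (i) I would start from the $K$-contraction and the $\mathrm{EVI}_{K,N}$ satisfied by the heat flow on an $\rcdkn$ space, and differentiate the entropy twice along suitable displacement interpolations. Matching the resulting second-derivative identity with the definition of ${\bf \Gamma}_2(f)$ produces, after testing against nonnegative weights $\phi$, the inequality $\int \phi\,\d{\bf \Gamma}_2(f)\ge \int \phi\,(K|\D f|^2+\tfrac1N(\Delta f)^2)\,\d\mm$ for every $f\in{\rm TestF}(M)$; since this holds for all admissible $\phi$, the measure inequality follows.

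For the converse (ii), the strategy is to upgrade the pointwise Bochner inequality to a gradient estimate for the semigroup. A test-function and self-improvement argument within the abstract $\Gamma$-calculus turns ${\bf \Gamma}_2(f)\ge(K|\D f|^2+\tfrac1N(\Delta f)^2)\mm$ into the Bakry--Ledoux estimate $|\D P_t f|^2\le e^{-2Kt}P_t(|\D f|^2)$ together with its $N$-dimensional refinement. By Kuwada's duality these gradient estimates are equivalent to a $(K,N)$-contraction of the heat flow in $\ws$, and the contraction, combined with the gradient-flow structure, yields $\mathrm{EVI}_{K,N}$; by the Ambrosio--Gigli--Savar\'e and Erbar--Kuwada--Sturm theory the latter is equivalent to $\rcdkn$. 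It is precisely here that the Sobolev-to-Lipschitz property is indispensable: it lets one pass from the $\mm$-a.e. gradient bounds produced by the $\Gamma$-calculus to genuine pointwise Lipschitz, and hence $\ws$-metric, statements.

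The main obstacle is the self-improvement step in (ii). In the smooth case one simply applies a pointwise Cauchy--Schwarz to the traceless Hessian in \eqref{bf} to extract the dimensional term $\tfrac1N(\Delta f)^2$; in the non-smooth setting there is no pointwise Hessian, so this must be replaced by an abstract argument that differentiates the algebraic structure of $\Gamma_2$ and exploits the chain and Leibniz rules for $\la\nabla\cdot,\nabla\cdot\ra$. Equally delicate is guaranteeing that ${\rm TestF}(M)$ is rich enough to justify the differentiation-under-the-integral and integration-by-parts manipulations, and controlling the approximation terms they generate.
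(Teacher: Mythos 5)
The first thing to say is that the paper itself contains no proof of this proposition: it is quoted as a known result, with the $N=\infty$ case attributed to Ambrosio--Gigli--Savar\'e \cite{AGS-B, AGS-M} and the finite-dimensional case to Erbar--Kuwada--Sturm \cite{EKS-O}. So the only meaningful comparison is with those references, and your outline does follow their architecture: heat flow as the common gradient flow of Cheeger energy and entropy, $\mathrm{EVI}$/Wasserstein contraction, Kuwada duality, Bakry--Ledoux gradient estimates, and the Sobolev-to-Lipschitz property to turn $\mm$-a.e.\ gradient bounds into metric statements. As a roadmap it is essentially faithful to the literature the paper cites.

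As a proof, however, it has a genuine gap and one conceptual error. The gap: every arrow in your chain --- the $\mathrm{EVI}_{K,N}$ theory for the entropy, the dimensional refinement of Kuwada's duality, the justification of differentiating $s\mapsto P_s\big(|\D P_{t-s}f|^2\big)$ when ${\bf \Gamma}_2$ is only a measure, mass conservation, and the density of ${\rm TestF}(M)$ --- is itself a substantial theorem occupying most of \cite{AGS-B} and \cite{EKS-O}; you name these steps but prove none of them, so the proposal is a table of contents rather than an argument. The conceptual error: you identify the ``main obstacle'' in direction (ii) as a self-improvement step needed to \emph{extract} the dimensional term $\tfrac1N(\Delta f)^2$ by Cauchy--Schwarz on the traceless Hessian. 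But in direction (ii) that term is part of the \emph{hypothesis}: the assumed Bochner inequality already contains it, and it is simply carried through the semigroup interpolation to yield the dimensional Bakry--Ledoux estimate; no Hessian and no Cauchy--Schwarz enter there. The inequality $\tfrac1n(\Delta f)^2\le|\H_f|_{\rm HS}^2$ is relevant in the opposite situation (e.g.\ deducing the dimensional Bochner inequality from Hessian bounds, as in Savar\'e's self-improvement or in statements like Proposition \ref{prop-ricci}), not here. A smaller inaccuracy: in direction (i) the rigorous route of the cited papers is not to differentiate the entropy twice along displacement interpolations (that is the formal Otto calculus); it is to pass from $\mathrm{EVI}$ to $\ws$-contraction, then by Kuwada duality to the gradient estimate, and to obtain Bochner by differentiating that estimate at $t=0$.
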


Let $f \in {\rm TestF}(M)$. We define the Hessian $\H_f: \{ \nabla g: g \in {\rm TestF}(M)\}^2 \mapsto L^0(M)$ by
\[
2\H_f(\nabla g,\nabla h)=\la \nabla g, \nabla \la \nabla f, \nabla h \ra \ra +\la \nabla h, \nabla \la \nabla f, \nabla g \ra \ra-\la \nabla f, \nabla \la \nabla g, \nabla h \ra \ra
\]
 for any $g, h \in {\rm TestF}(M)$. Using the estimate obtained in \cite{S-S}, it can be seen that $\H_f$ can be extended to a symmetric $L^\infty(M)$-bilinear map on $L^2(TM)$ (see \cite{G-N} for the definition) and continuous with values in $L^0(M)$,  see Theorem 3.3.8 in \cite{G-N} for a proof. On Riemannian manifolds (with boundary), it can be seen that $\H_f$ coincides with the usual Hessian $\nabla^2 f$, $\mm$-a.e., and the Hilbert-Schimidt norms are also identified. 
 
Furthermore,  we have the following proposition.
 
 \begin{proposition}[See \cite{G-N}] \label{prop-ricci}
Let $M$ be an infinitesimally Hilbertian space satisfying Sobolev-to-Lipschitz property. Then
$M$ is $\rcd$  if and only if
\[
\bRic(\nabla f, \nabla f) \geq K|\D f|^2\,\mm
\]
for any $f\in {\rm TestF}(M)$,
where
\begin{eqnarray*}
\bRic(\nabla f, \nabla f):={\bf \Gamma}_2(f) - |\H_f|^2_{\rm HS} \, \mm.
\end{eqnarray*}
\end{proposition}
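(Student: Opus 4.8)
The plan is to route both implications through Proposition \ref{becondition} specialized to $N=\infty$, for which the Bakry-\'Emery condition reduces to ${\bf \Gamma}_2(f)\geq K|\D f|^2\,\mm$ for all $f\in{\rm TestF}(M)$, and then to exploit the pointwise nonnegativity of $|\H_f|^2_{\rm HS}$ together with the self-improvement of the Bochner inequality. Since by definition $\bRic(\nabla f,\nabla f)={\bf \Gamma}_2(f)-|\H_f|^2_{\rm HS}\,\mm$, the asserted inequality $\bRic(\nabla f,\nabla f)\geq K|\D f|^2\,\mm$ is literally the same as the improved Bochner inequality ${\bf \Gamma}_2(f)\geq\big(K|\D f|^2+|\H_f|^2_{\rm HS}\big)\,\mm$, so the whole statement amounts to comparing this improved inequality with the plain one.

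First I would dispatch the easy implication: assuming $\bRic(\nabla f,\nabla f)\geq K|\D f|^2\,\mm$, the nonnegativity $|\H_f|^2_{\rm HS}\geq 0$ gives
\[
{\bf \Gamma}_2(f)=\bRic(\nabla f,\nabla f)+|\H_f|^2_{\rm HS}\,\mm\geq\bRic(\nabla f,\nabla f)\geq K|\D f|^2\,\mm,
\]
whence $M$ is $\rcd$ by Proposition \ref{becondition} with $N=\infty$.

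The substance lies in the converse. Proposition \ref{becondition} only furnishes ${\bf \Gamma}_2(f)\geq K|\D f|^2\,\mm$, and I must upgrade this to the improved inequality above. The strategy is the self-improvement of the Bakry-\'Emery estimate in the spirit of Savar\'e: rather than testing the plain inequality on $f$ alone, I would apply it to nonlinear modifications $\varphi(f)$ and to combinations with other test functions. Using the chain rules $|\D\varphi(f)|^2=\varphi'(f)^2|\D f|^2$ and $\Delta\varphi(f)=\varphi'(f)\Delta f+\varphi''(f)|\D f|^2$ together with the Leibniz rule for the carr\'e du champ, one expands ${\bf \Gamma}_2(\varphi(f))\geq K|\D\varphi(f)|^2\,\mm$, isolates the terms of order $\varphi''$, and minimizes over the pointwise free parameter $\varphi''(f)/\varphi'(f)$; completing the resulting square leaves a nonnegative remainder. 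To promote this scalar gain to the full Hilbert-Schmidt norm one performs the same perturbation jointly in several directions $\nabla g,\nabla h$, matching the optimized correction terms against the defining identity $2\H_f(\nabla g,\nabla h)=\la\nabla g,\nabla\la\nabla f,\nabla h\ra\ra+\la\nabla h,\nabla\la\nabla f,\nabla g\ra\ra-\la\nabla f,\nabla\la\nabla g,\nabla h\ra\ra$; this is precisely the computation carried out in \cite{G-N}.

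I expect the self-improvement to be the main obstacle. The difficulty is structural: the plain Bochner inequality controls only the single measure ${\bf \Gamma}_2(f)$, whereas $|\H_f|^2_{\rm HS}$ is the full Hilbert-Schmidt norm of the bilinear form $\H_f$, so one must reconstruct an entire symmetric tensor from scalar inequalities via polarization. The delicate technical points are to guarantee that the nonlinear modifications $\varphi(f)$ and the relevant products remain in ${\rm TestF}(M)$ so that $\varphi''(f)/\varphi'(f)$ may indeed be prescribed arbitrarily, and to control the measure-valued (in particular the singular) part of ${\bf \Gamma}_2$ under these algebraic manipulations. Once the optimized correction is identified with the quadratic form of $\H_f$, the improved inequality, and hence the converse implication, follows.
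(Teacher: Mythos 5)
The paper offers no proof of this proposition at all---it is imported verbatim from Gigli's memoir \cite{G-N}---and your proposal reconstructs precisely the argument underlying that citation: the backward implication is the trivial one (drop the nonnegative term $|\H_f|^2_{\rm HS}\,\mm$ and invoke Proposition \ref{becondition} with $N=\infty$), while the forward implication is the Savar\'e--Gigli self-improvement of the Bochner inequality, which you correctly isolate as the substantive ingredient and, like the paper, ultimately defer to \cite{S-S} and \cite{G-N}. Your decomposition and the technical points you flag (stability of ${\rm TestF}(M)$ under the nonlinear perturbations $\varphi(f)$, control of the singular part of ${\bf \Gamma}_2$) are exactly those of the cited source, so this is the same route the paper itself relies on.
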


\bigskip

Now we introduce our main theorem.

\begin{theorem}[Measure-valued Ricci tensor]\label{th:ricci}
Let $M=(X, {\rm g}, e^{-V}{\rm Vol}_{\rm g})$ be a $n$-dimensional weighted  Riemannian manifold and $\Omega \subset M$ be a  submanifold with $(n-1)$-dimensional smooth  orientable  boundary. Then  the measure valued Ricci tensor on  $(\Omega, \d_\Omega, e^{-V}{\rm Vol}_{\rm g})$ 
can be computed as
\begin{equation}
\bRic_\Omega(\nabla g, \nabla g)=\Ric_V(\nabla g, \nabla g)\,e^{-V}\d {\rm Vol}_{\rm g}+II(\nabla g, \nabla g) \,e^{-V}\d \mathcal{H}^{n-1}\restr{\partial \Omega}
\end{equation}
for any  $g\in C_c^\infty$ with $ {\rm g} (N, \nabla g)=0$, where $N$ is the outwards normal vector field on $\partial \Omega$, and $\Ric_V$ is the usual Bakry-\'Emery Ricci tensor on $M$.
\end{theorem}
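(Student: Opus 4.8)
The plan is to unwind the definition $\bRic_\Omega(\nabla g,\nabla g) = {\bf\Gamma}_2(g) - |\H_g|^2_{\rm HS}\,\mm$ from Proposition \ref{prop-ricci}, compute the measure ${\bf\Gamma}_2(g)$ explicitly on the manifold with boundary, and split it into an absolutely continuous part (supported in the interior) and a singular part (concentrated on $\partial\Omega$). Throughout I use that the abstract weak gradient and Hessian agree $\mm$-a.e. with the smooth $\nabla$ and $\nabla^2$ on $(\Omega, \d_\Omega, e^{-V}{\rm Vol}_{\rm g})$, as recorded in the excerpt; this lets me replace all nonsmooth objects by their classical counterparts and invoke the smooth Bakry-\'Emery Bochner formula \eqref{bf}. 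The hypothesis $g\in C_c^\infty$ with ${\rm g}(N,\nabla g)=0$ plays two roles: it guarantees that ${\bf\Delta}g$ carries no boundary term, so that $g\in{\rm D}({\bf\Delta})$ with absolutely continuous Laplacian whose density $\Delta g$ equals the Witten-Laplacian $Lg$, making $\la\nabla g,\nabla\Delta g\ra$ genuinely the smooth function $\la\nabla g,\nabla Lg\ra$; and it is precisely the tangency condition that will later convert a normal derivative into the second fundamental form.

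The first computation is the measure-valued Laplacian of $u:=|\nabla g|^2$. Combining the definition of ${\bf\Delta}$ with the weighted divergence theorem on $\Omega$, namely $\int_\Omega \mathrm{div}_\mm W\,\d\mm = \int_{\partial\Omega}{\rm g}(W,N)\,e^{-V}\d\mathcal{H}^{n-1}$ with $\mathrm{div}_\mm := \mathrm{div} - {\rm g}(\nabla V,\cdot)$, applied to $W=h\nabla u$, I obtain
\begin{equation*}
{\bf\Delta}u = (Lu)\,e^{-V}\d{\rm Vol}_{\rm g} - \big(\partial_N u\big)\,e^{-V}\d\mathcal{H}^{n-1}\restr{\partial\Omega},
\end{equation*}
where $\partial_N u={\rm g}(N,\nabla u)$. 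The same computation with $g$ in place of $u$ produces a boundary term proportional to ${\rm g}(N,\nabla g)$, which vanishes by hypothesis, confirming $\Delta g=Lg$. Feeding these into ${\bf\Gamma}_2(g)=\tfrac12{\bf\Delta}u - \la\nabla g,\nabla\Delta g\ra\,\mm$ and evaluating the absolutely continuous part with the smooth identity $\tfrac12 L|\nabla g|^2 - {\rm g}(\nabla g,\nabla Lg) = \Ric_V(\nabla g,\nabla g) + |\H_g|^2_{\rm HS}$ (a rearrangement of \eqref{bf}, using $\Ric+\H_V=\Ric_V$), I get
\begin{equation*}
{\bf\Gamma}_2(g) = \big(\Ric_V(\nabla g,\nabla g) + |\H_g|^2_{\rm HS}\big)\,e^{-V}\d{\rm Vol}_{\rm g} - \tfrac12\big(\partial_N|\nabla g|^2\big)\,e^{-V}\d\mathcal{H}^{n-1}\restr{\partial\Omega}.
\end{equation*}
Subtracting $|\H_g|^2_{\rm HS}\,\mm$ cancels the $|\H_g|^2_{\rm HS}$ term in the bulk and leaves $\Ric_V(\nabla g,\nabla g)\,e^{-V}\d{\rm Vol}_{\rm g}$ in the interior, so it only remains to identify the boundary density $-\tfrac12\,\partial_N|\nabla g|^2$ with $II(\nabla g,\nabla g)$.

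This last identification is the geometric heart of the argument, and the step I expect to demand the most care with conventions. I would start from the constraint ${\rm g}(N,\nabla g)=0$ along $\partial\Omega$ and differentiate it in a direction $X$ tangent to $\partial\Omega$; since the constraint holds identically on $\partial\Omega$, its tangential derivative vanishes, yielding ${\rm g}(\nabla_X N,\nabla g) + \H_g(X,N) = 0$, i.e.\ $\H_g(N,X) = -II(X,\nabla g)$ with $II(X,Y):={\rm g}(\nabla_X N,Y)$. Choosing $X=\nabla g$, which is tangent to $\partial\Omega$ exactly by the hypothesis ${\rm g}(N,\nabla g)=0$, and using $\partial_N|\nabla g|^2 = 2\,\H_g(N,\nabla g)$ together with the symmetry of $\H_g$, gives $-\tfrac12\,\partial_N|\nabla g|^2 = II(\nabla g,\nabla g)$. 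Combining this with the previous display produces the claimed formula. The main obstacle is thus not the bulk Bochner computation, which is classical, but tracking the boundary contribution correctly: fixing the sign of the singular part of ${\bf\Delta}$ relative to the outward normal $N$, and matching it to the sign convention for $II$ (normalized so that $II\geq 0$ for convex $\Omega$), which is what makes the final boundary term coincide with the second fundamental form.
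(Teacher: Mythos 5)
Your proposal is correct and follows essentially the same route as the paper: compute the measure-valued Laplacian via the weighted Green/divergence formula to split ${\bf\Delta}$ into a bulk part and a boundary part, use the classical Bakry-\'Emery Bochner identity to handle the absolutely continuous part (cancelling $|\H_g|^2_{\rm HS}$), and convert the boundary density $-\tfrac12\,\partial_N|\nabla g|^2$ into $II(\nabla g,\nabla g)$ by differentiating the constraint ${\rm g}(N,\nabla g)=0$ along the tangential direction $\nabla g$. The paper's version of this last step writes $II(\nabla g,\nabla g)={\rm g}\big(\nabla\,{\rm g}(N,\nabla g),\nabla g\big)-\tfrac12\,{\rm g}(N,\nabla|\nabla g|^2)$ and kills the first term by tangency, which is the same Leibniz computation you perform.
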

\begin{proof}
By integration by part formula (or Green's formula) on Riemannian manifold, we know
\[
\int {\rm g}( \nabla f, \nabla g )\,e^{-V}\d {\rm Vol}_{\rm g}=-\int f \Delta_V g\,e^{-V}\d {\rm Vol}_{\rm g}+\int_{\partial \Omega} f{\rm g}( N, \nabla g)\,e^{-V}\d \mathcal{H}^{n-1}\restr{\partial \Omega}
\]
for any $f, g \in C_c^\infty$, where $\Delta_V:=(\Delta-\nabla V)$ and  $N$ is the outwards normal vector field,  $\mathcal{H}^{n-1}\restr{\partial \Omega}$ is the $(n-1)$-dimensional Hausdorff measure on $\partial \Omega$.  From the discussions before we know
\[
\int \la \nabla f, \nabla g \ra_\Omega \,e^{-V}\d {\rm Vol}_{\rm g}=-\int f \Delta_V g\,e^{-V}\d {\rm Vol}_{\rm g}+\int_{\partial \Omega} f{\rm g}( N, \nabla g)\,e^{-V}\d \mathcal{H}^{n-1}\restr{\partial \Omega}.
\]

Therefore we  know $g \in {\rm D}({\bf \Delta}_\Omega) $ and we obtain the following formula concerning the measure-valued Laplacian
\[
{\bf \Delta}_\Omega g=\Delta_V g\,e^{-V}\d{\rm Vol}_{\rm g}- {\rm g}( N, \nabla g) \,e^{-V}\d \mathcal{H}^{n-1}\restr{\partial \Omega}.
\] 
 Therefore for any $g \in C_c^\infty$ with ${\rm g}(  N, \nabla g)=0$ on $\partial \Omega$, we know $g \in {\rm TestF}(\Omega)$.

Now we can compute the measure-valued Bakry-\'Emery tensor. Let  $g \in C_c^\infty$  with  ${\rm g}(  N, \nabla g)=0$ on $\partial \Omega$. We have
\begin{eqnarray*}
\bRic_\Omega(\nabla g, \nabla g)&=&\frac12 {\bf \Delta}_\Omega |\D g|_\Omega^2-\la \nabla g, \nabla \Delta_\Omega g\ra_\Omega \,e^{-V}\d{\rm Vol}_{\rm g}-\|\Hess_g\|^2_{\rm HS}\,e^{-V}\d{\rm Vol}_{\rm g}\\
&=&\frac12 { \Delta}_V |\nabla g|^2\,e^{-V}\d{\rm Vol}_{\rm g}-{\rm g}(\nabla g, \nabla \Delta_V g) \,e^{-V}\d{\rm Vol}_{\rm g}-\|\Hess_g\|^2_{\rm HS}\,e^{-V}\d{\rm Vol}_{\rm g}\\ 
&& -\frac12{\rm g}( N, \nabla |\nabla g|^2) \,e^{-V}\d \mathcal{H}^{n-1}\restr{\partial \Omega}\\
&=&  \Ric(\nabla g, \nabla g)\,e^{-V}\d {\rm Vol}_{\rm g}+\H_V(\nabla g, \nabla g)\,e^{-V}{\rm Vol}_{\rm g}\\
&&~~~~~~~~~~~~~~~-\frac12{\rm g}( N, \nabla |\nabla g|^2) \,e^{-V}\d \mathcal{H}^{n-1}\restr{\partial \Omega}\\
&=& \Ric_V(\nabla g, \nabla g)\,e^{-V}\d {\rm Vol}_{\rm g}-\frac12{\rm g}( N, \nabla |\nabla g|^2) \,e^{-V}\d \mathcal{H}^{n-1}\restr{\partial \Omega},
\end{eqnarray*}
where we use Bochner formula at the third equality and $\Ric_V=\Ric+\H_V$ is the Bakry-\'Emery Ricci tensor on weighted Riemannian manifold w.r.t the weight $e^{-V}$.

By definition of second fundamental form, we have
\[
II(\nabla g, \nabla g)={\rm g}( \nabla_{\nabla g} N, \nabla g)={\rm g}\big ( \nabla {\rm g}(N, \nabla g ), \nabla g \big )-\frac12 {\rm g}( N, \nabla |\nabla g|^2) .
\]
However, we assume that  ${\rm g}( N, \nabla g)=0$ on $\partial \Omega$. Hence ${\rm g}( \nabla_{\nabla g} N, \nabla g)=-\frac12 {\rm g}( N, \nabla |\nabla g|^2)$.

Finally, we obtain
\begin{equation}
\bRic_\Omega(\nabla g, \nabla g)=\Ric_V(\nabla g, \nabla g)\,\d {\rm Vol}_{\rm g}+II (\nabla g, \nabla g)\,e^{-V}\d \mathcal{H}_{n-1}\restr{\partial \Omega}
\end{equation}
for any  $g\in C_c^\infty$ with ${\rm g}(N, \nabla g)=0$.
\end{proof}

\bigskip

In the next corollary we will see that the space   $\{g: g\in C_c^\infty, {\rm g}(N, \nabla g)=0\}\subset {\rm TestF}(\Omega)$ is big enough to characterize the Ricci curvature and the mean curvature.

\begin{corollary}[Rigidity: convexity of the boundary]
Let   $(\Omega, \d_\Omega, e^{-V}{\rm Vol}_{\rm g})$ be a space as in Theorem \ref{th:ricci}. Then it  is $\rcd$   if and only if $\partial \Omega$ is convex and $\Ric_V \geq K$ on $\Omega$. 
\end{corollary}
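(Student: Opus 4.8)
The plan is to read the corollary off the explicit decomposition of Theorem~\ref{th:ricci} together with the characterisation of $\rcd$ in Proposition~\ref{prop-ricci}, using crucially that the ambient volume measure $e^{-V}\d{\rm Vol}_{\rm g}$ and the boundary measure $e^{-V}\d\mathcal H^{n-1}\restr{\partial\Omega}$ are mutually singular. This singularity is what lets me decouple the interior (Ricci) information from the boundary (second fundamental form) information inside the single measure inequality $\bRic_\Omega(\nabla g,\nabla g)\ge K|\D g|^2\,\mm$.

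For the necessity direction, suppose the space is $\rcd$. By Proposition~\ref{prop-ricci} the inequality $\bRic_\Omega(\nabla g,\nabla g)\ge K|\D g|^2\,\mm$ holds for \emph{every} $g\in{\rm TestF}(\Omega)$, hence in particular for every $g\in C_c^\infty$ with ${\rm g}(N,\nabla g)=0$ on $\partial\Omega$. For such $g$ I substitute the formula of Theorem~\ref{th:ricci} and recall $|\D g|=|\nabla g|$, obtaining
\[
\Ric_V(\nabla g,\nabla g)\,e^{-V}\d{\rm Vol}_{\rm g}+II(\nabla g,\nabla g)\,e^{-V}\d\mathcal H^{n-1}\restr{\partial\Omega}\ \ge\ K|\nabla g|^2\,e^{-V}\d{\rm Vol}_{\rm g}.
\]
Since the right-hand side is absolutely continuous with respect to ${\rm Vol}_{\rm g}$ while the boundary term is singular, comparing Lebesgue decompositions forces $\Ric_V(\nabla g,\nabla g)\ge K|\nabla g|^2$ pointwise on $\Omega$ and $II(\nabla g,\nabla g)\ge0$ pointwise on $\partial\Omega$ simultaneously. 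Finally I invoke the observation recorded just before the corollary, that the admissible gradients are rich enough: at an interior point the Neumann constraint is vacuous, so $\nabla g$ ranges over all tangent vectors and $\Ric_V\ge K$ on $\Omega$ follows; at a boundary point the constraint forces $\nabla g$ to be tangent to $\partial\Omega$, and every tangential direction is realised, whence $II\ge0$, i.e. $\partial\Omega$ is convex.

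For the sufficiency direction, assume $\Ric_V\ge K$ on $\Omega$ and $II\ge0$ on $\partial\Omega$. Then Theorem~\ref{th:ricci} directly gives $\bRic_\Omega(\nabla g,\nabla g)\ge K|\D g|^2\,\mm$ for all $g\in C_c^\infty$ satisfying the Neumann condition. To conclude via Proposition~\ref{prop-ricci} I must promote this inequality to the whole class ${\rm TestF}(\Omega)$. I would do so by a density argument: the smooth functions with ${\rm g}(N,\nabla g)=0$ form a core for the natural (Neumann) Dirichlet form, so any $f\in{\rm TestF}(\Omega)$ can be approximated by such $g$ in a topology controlling $|\D\cdot|$, the Laplacian and the Hessian; the $L^\infty$-bilinearity and $L^0$-continuity of $\H_f$ recalled before Proposition~\ref{prop-ricci} then make the measure-valued Bochner inequality stable under this convergence, and passing to the limit yields the bound on all of ${\rm TestF}(\Omega)$.

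The main obstacle is precisely this density and stability step in the sufficiency direction: one must verify that general test functions are approximable by smooth Neumann functions in a sense strong enough to preserve the singular boundary term of $\bRic_\Omega$. The necessity direction is comparatively soft, since there the $\rcd$ hypothesis already supplies the inequality on all of ${\rm TestF}(\Omega)$, and only the mutual singularity of the two measures, together with the direction-filling property of the admissible gradients, is needed.
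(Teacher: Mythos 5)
Your necessity direction follows the paper's in spirit (substitute Theorem \ref{th:ricci} into Proposition \ref{prop-ricci} and use mutual singularity of $e^{-V}\d{\rm Vol}_{\rm g}$ and $e^{-V}\d\mathcal H^{n-1}\restr{\partial\Omega}$ to split the inequality), but it contains a circularity: the ``observation recorded just before the corollary'' that the admissible class $\{g\in C_c^\infty:\ {\rm g}(N,\nabla g)=0\}$ has direction-filling gradients is not a prior result you may invoke --- it is exactly what the corollary's proof is supposed to establish. Concretely, to get $II\geq 0$ you must show that for every boundary point and every \emph{tangential} direction $v$ there is a smooth, compactly supported $g$ with ${\rm g}(N,\nabla g)=0$ on all of $\partial\Omega$ whose gradient at that point is $v$. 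The paper does this by solving a Cauchy problem (via Cauchy--Kovalevskaya) with prescribed boundary values and vanishing normal derivative, then multiplying by a cut-off; an extension constant along normal geodesics in a collar neighbourhood would also work. Without some such construction your claim ``every tangential direction is realised'' is an assertion, not a proof.

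The sufficiency direction is where the real gap lies. You reduce everything to promoting the inequality from smooth Neumann functions to all of ${\rm TestF}(\Omega)$ by a density-and-stability argument, and you yourself flag this as the main obstacle --- rightly so, because it is unproven and genuinely delicate: one would need to approximate an arbitrary test function by smooth functions satisfying the Neumann condition with convergence strong enough to pass a \emph{measure-valued} inequality, including its singular boundary part, to the limit; this requires control of Laplacians and Hessians under the approximation and is essentially a regularity statement for the Neumann problem that you do not supply. The paper avoids this entirely by switching to a geometric argument: $II\geq 0$ (infinitesimal convexity) implies local convexity of $\Omega$ in the ambient manifold by Bishop's theorem \cite{Bishop-I}; a locally convex domain in a weighted manifold with $\Ric_V\geq K$ is locally $\rcd$ (geodesics and optimal-transport interpolations stay inside, so the displacement convexity is inherited); and the local-to-global property of the $\rcd$ condition \cite{S-O1} then yields the global statement. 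This route never needs the Bochner inequality on the full class ${\rm TestF}(\Omega)$, which is precisely the step your proposal cannot close. As written, your proof of sufficiency is incomplete.
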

\begin{proof}
If $\Omega$ is $\rcd$, then from Proposition \ref{prop-ricci} we know  $\bRic_\Omega(\nabla g, \nabla g) \geq K|\nabla g|^2{\rm Vol}_{\rm g}$ for any $g\in {\rm TestF}(\Omega)$. By Theorem \ref{th:ricci} we know $\Ric_V(\nabla g, 
\nabla g) \geq K|\nabla g|^2$ and $II (\nabla g, 
\nabla g) \geq 0$ for any  $g\in C_c^\infty$ with ${\rm g}(N, \nabla g)=0$.

On one hand,  for any  $g\in C_c^\infty(\Omega)$ with support inside $\Omega$, we know $g \in {\rm TestF}$. Applying Theorem \ref{th:ricci} with  any of these $g$,   we know  $\Ric_V(\nabla g, \nabla g) \geq K|\nabla g|^2$, hence $\Ric_V \geq K$. 
On the other hand, for any $g\in C_c^\infty(\partial \Omega)$. By Cauchy–Kovalevskaya theorem we know the Cauchy problem:
\begin{itemize}
\item [1)] $f=g$ on $\partial \Omega$,
\item [2)] ${\rm g}(\nabla f, N)=0$ on $\partial \Omega$
\end{itemize}
has a local analytical solution $\bar{g}$. Furthermore, by multiplying an appropriate smooth cut-off function we can assume further that  $ {\bar g}\in C_c^\infty(\Omega)$  and $\bar g \in {\rm TestF}(\Omega)$. Applying Theorem \ref{th:ricci} with   $\bar g$ ,  we know $II(\nabla  g, \nabla  g) \geq 0$. Since $g$ is arbitrary, we know $II \geq 0$.

Conversely, if $\partial \Omega$ is convex we know $\Omega$ is locally  convex in the ambient space $X$ (see e.g. \cite{Bishop-I}). Combining with $\Ric_V \geq 0$ we know  $\Omega$ is locally $\rcd$.  By local to global property  of $\rcd$ condition (see e.g. \cite{S-O1}), we prove the result.
\end{proof}

\begin{remark}
In this corollary,  we only study the manifolds with boundary which can be regarded as a submanifold with orientable boundary. Since the problem we are considering is local, it is not more restrictive than general case.
\end{remark}

\begin{remark}
In \cite{AGS-C} Ambrosio-Gigli-Savar\'e identify the gradient flow of Boltzman entropy with the (Neumann) heat flow. In \cite{AGS-M} they prove the exponential contraction of heat flows in Wasserstein distance. Combining the result of Wang (see Theorem 3.3.2 in \cite{WFY-A}) we can also prove this result. 
\end{remark}

\begin{corollary}
A  $N$-dimensional  Riemannian manifold with  boundary is $\rcd$ if and only if it is $\rcdkn$.
\end{corollary}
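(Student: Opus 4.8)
The plan is to prove the two implications separately. The implication $\rcdkn \Rightarrow \rcd$ holds in complete generality and uses nothing about the smooth structure: the finite-dimensional condition ${\rm CD}^*(K,N)$ always entails the infinite-dimensional condition ${\rm CD}(K,\infty)$ (monotonicity of the curvature-dimension conditions in the dimensional parameter, Bacher--Sturm), while infinitesimal Hilbertianity is insensitive to the choice of $N$. Thus the whole content of the corollary lies in the converse $\rcd \Rightarrow \rcdkn$, and this is where I would invoke Theorem \ref{th:ricci}.

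So suppose the $N$-dimensional manifold $(\Omega,\d_\Omega,{\rm Vol}_{\rm g})$ is $\rcd$; here the measure is the unweighted volume, so $V\equiv 0$, $\mm={\rm Vol}_{\rm g}$ and $\Ric_V=\Ric$. By the preceding corollary this is equivalent to the two smooth conditions $\Ric\geq K$ on $\Omega$ and $II\geq 0$ on $\partial\Omega$. My strategy is to feed these into the Bakry-\'Emery characterization of Proposition \ref{becondition}: it suffices to verify the measure inequality ${\bf \Gamma}_2(g)\geq\big(K|\D g|^2+\frac1N(\Delta g)^2\big)\mm$ for every $g\in{\rm TestF}(\Omega)$, since by that proposition this is precisely the condition $\rcdkn$.

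To carry out the verification I would first observe that any $g\in{\rm TestF}(\Omega)$ has ${\bf \Delta}_\Omega g\ll\mm$, so by the Laplacian formula established in the proof of Theorem \ref{th:ricci} it satisfies the Neumann condition ${\rm g}(N,\nabla g)=0$ on $\partial\Omega$, and $\Delta g=\Delta_V g=\tr\Hess_g$ (recall $V\equiv 0$). Combining ${\bf \Gamma}_2(g)=\bRic_\Omega(\nabla g,\nabla g)+\|\Hess_g\|^2_{\rm HS}\,\mm$ with the representation of Theorem \ref{th:ricci} yields the decomposition
\[
{\bf \Gamma}_2(g)=\Big(\Ric(\nabla g,\nabla g)+\|\Hess_g\|^2_{\rm HS}\Big)\mm+II(\nabla g,\nabla g)\,\d\mathcal H^{n-1}\restr{\partial\Omega}
\]
into an absolutely continuous part and a part concentrated on $\partial\Omega$. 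The boundary part is nonnegative because $II\geq 0$, and the target measure on the right-hand side of the Bakry-\'Emery inequality is absolutely continuous, so it is enough to compare the densities. At this point the dimensional term enters through the pointwise Cauchy-Schwarz (trace) inequality $\|\Hess_g\|^2_{\rm HS}\geq\frac1N(\tr\Hess_g)^2=\frac1N(\Delta g)^2$, valid exactly because $\dim\Omega=N$, while $\Ric\geq K$ gives $\Ric(\nabla g,\nabla g)\geq K|\D g|^2$; adding these yields the required density inequality $\mm$-a.e.

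The main obstacle I anticipate is bookkeeping at the boundary rather than any deep difficulty: one must check that the decomposition above, which Theorem \ref{th:ricci} establishes for $g\in C_c^\infty$ with ${\rm g}(N,\nabla g)=0$, persists for an arbitrary test function. I would handle this by a density argument, using that such smooth Neumann functions are dense in ${\rm TestF}(\Omega)$ in the $W^{1,2}$-graph topology together with the weak continuity of ${\bf \Gamma}_2$ and the closability of $\Hess$, which let the pointwise inequality pass to the limit; alternatively the same density inequality follows directly from the smooth Bochner identity \eqref{bf} in the interior. A secondary point worth flagging is that the clean identification $\Delta g=\tr\Hess_g$, hence the exact $\frac1N$ factor, uses that the reference measure is the unweighted volume; for a genuinely weighted measure $e^{-V}{\rm Vol}_{\rm g}$ the correct effective dimension exceeds $n$ and the statement must be read accordingly, but no such correction arises in the $N$-dimensional setting of this corollary.
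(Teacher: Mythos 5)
The paper states this corollary \emph{without proof}, so there is no written argument to compare against: what you have produced is the missing proof, and its strategy is sound. The easy direction is as you say (inside the paper's own framework it is even quicker: the inequality of Proposition \ref{becondition} with the term $\frac1N(\Delta f)^2\geq 0$ trivially implies the same inequality with $N=\infty$, so $\rcdkn\Rightarrow\rcd$ needs no citation of Bacher--Sturm monotonicity). For the hard direction your Eulerian route is correct: the rigidity corollary gives $\Ric\geq K$ and $II\geq 0$ when $V\equiv 0$, the decomposition ${\bf \Gamma}_2(g)=\bRic_\Omega(\nabla g,\nabla g)+\|\Hess_g\|^2_{\rm HS}\,\mm$ together with Theorem \ref{th:ricci} splits ${\bf \Gamma}_2(g)$ into a nonnegative boundary part and an interior density, and the pointwise trace inequality $\|\Hess_g\|^2_{\rm HS}\geq\frac1N(\tr \Hess_g)^2=\frac1N(\Delta g)^2$ supplies exactly the dimensional term, so Proposition \ref{becondition} applies. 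You also correctly identify the one nontrivial point: Theorem \ref{th:ricci} is proved only for smooth Neumann functions, while Proposition \ref{becondition} requires the inequality for all of ${\rm TestF}(\Omega)$. Be aware that your fix, though workable, is where the real labour sits and is more than bookkeeping: one needs that ${\rm TestF}(\Omega)$ lies in the domain of the classical Neumann Laplacian, elliptic regularity for the Neumann problem (to get $W^{2,2}$ control and density of smooth Neumann functions in graph norm), and stability of the weak Bochner inequality under that convergence; the paper itself only \emph{asserts}, before the rigidity corollary, that the Neumann class is ``big enough,'' without justification. An alternative that avoids the density question entirely, and that matches the pattern of the paper's proof of the rigidity corollary, is Lagrangian: $\Ric\geq K$ plus convex boundary give local convexity and hence that $\Omega$ is locally ${\rm CD}(K,N)$ by classical smooth theory, so locally ${\rm CD}^*(K,N)$, and the local-to-global property of ${\rm CD}^*$ from \cite{BS-L} plus infinitesimal Hilbertianity yield $\rcdkn$. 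Your route buys an argument entirely inside the Bakry-\'Emery framework (in the spirit the paper advertises); the Lagrangian route buys brevity by outsourcing the boundary technicalities to the smooth theory and \cite{BS-L}.
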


The next corollary characterize  the Ricci-flat space as a metric measure space.

\begin{corollary}
Let $M$ and $\Omega$ be as above. Then $\Omega$ is a Ricci flat space, i.e. $\bRic_\Omega=0$,   if and only if it is a minimal hypersurface with zero Ricci curvature inside.
\end{corollary}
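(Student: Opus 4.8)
The plan is to read the statement off the decomposition of $\bRic_\Omega$ established in Theorem \ref{th:ricci}. For every $g \in C_c^\infty$ with ${\rm g}(N, \nabla g) = 0$ on $\partial\Omega$, that theorem gives
\[
\bRic_\Omega(\nabla g, \nabla g) = \Ric_V(\nabla g, \nabla g)\, e^{-V}\d{\rm Vol}_{\rm g} + II(\nabla g, \nabla g)\, e^{-V}\d\mathcal{H}^{n-1}\restr{\partial\Omega}.
\]
The two summands are mutually singular: the first is absolutely continuous with respect to ${\rm Vol}_{\rm g}$, while the second is carried by $\partial\Omega$, a set of ${\rm Vol}_{\rm g}$-measure zero. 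Consequently $\bRic_\Omega = 0$---equivalently $\bRic_\Omega(\nabla g, \nabla g) = 0$ for all admissible $g$, by polarization---holds if and only if each summand vanishes separately.

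First I would handle the interior term. Choosing $g \in C_c^\infty$ supported in the open interior of $\Omega$, the Neumann condition is vacuous and the boundary summand drops out, so vanishing of the absolutely continuous part forces $\Ric_V(\nabla g, \nabla g) = 0$ pointwise. Since at any interior point the gradients $\nabla g$ realize every tangent direction, polarization yields $\Ric_V \equiv 0$ on $\Omega$; this is the ``zero Ricci curvature inside'' condition.

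Next I would handle the boundary term, reusing the Cauchy--Kovalevskaya construction from the rigidity corollary above. Given any tangential direction along $\partial\Omega$, that construction produces (after a cutoff) a function $\bar g \in C_c^\infty(\Omega) \cap {\rm TestF}(\Omega)$ with ${\rm g}(N, \nabla \bar g) = 0$ and prescribed tangential gradient on $\partial\Omega$. Vanishing of the singular part then gives $II(\nabla \bar g, \nabla \bar g) = 0$, and since such tangential gradients span $T\partial\Omega$ at each point, polarization forces $II \equiv 0$, i.e. $\partial\Omega$ is totally geodesic---which is the precise sense in which it is a minimal hypersurface here. The converse is immediate: substituting $\Ric_V \equiv 0$ and $II \equiv 0$ into the displayed formula gives $\bRic_\Omega(\nabla g, \nabla g) = 0$ for every admissible $g$, hence $\bRic_\Omega = 0$.

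The one point requiring care is that the restricted class of Neumann test functions must detect the \emph{full} second fundamental form and not merely a trace such as the mean curvature; otherwise the boundary condition would read $H = 0$ rather than $II = 0$, and the converse implication would break. This is exactly what the Cauchy--Kovalevskaya step secures, since it lets one prescribe the tangential gradient freely, so that the vanishing of $\bRic_\Omega$ controls all of $II$. I expect this to be the only genuine obstacle; once it is in place, the mutual-singularity splitting and the two polarization arguments are routine.
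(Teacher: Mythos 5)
The paper states this corollary without any proof (it is presented as a direct consequence of Theorem \ref{th:ricci}, in the same pattern as the convexity corollary), so there is no official argument to compare yours against. Judged on its own merits, your route is the natural one and is essentially sound: the interior and boundary parts of the measure in Theorem \ref{th:ricci} are mutually singular, interior-supported test functions force $\Ric_V\equiv 0$ by polarization, and the Cauchy--Kovalevskaya/Neumann construction borrowed from the convexity corollary lets you prescribe tangential gradients along $\partial\Omega$, so the vanishing of the singular part controls the full quadratic form $II$, not just its trace.

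The genuine issue is precisely the one you try to dispose of at the end. What your argument correctly establishes is that $\bRic_\Omega=0$ is equivalent to $\Ric_V\equiv 0$ inside together with $II\equiv 0$ on $\partial\Omega$, i.e. the boundary is \emph{totally geodesic}. That is strictly stronger than minimality, $H=\tr\, II=0$, and declaring totally geodesic to be ``the precise sense in which it is a minimal hypersurface here'' is not an interpretation but a substitution: read literally, the ``if'' direction of the corollary is false. For instance, the closed region of flat $\R^3$ lying on one side of a catenoid has zero Ricci curvature inside and minimal boundary, yet $II\not\equiv 0$, so by Theorem \ref{th:ricci} its measure-valued Ricci tensor has a nonvanishing singular part. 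Your proof therefore proves a corrected statement, and it should be presented as such rather than as a confirmation of the statement as written. A second, smaller gap: in the converse you pass from ``$\bRic_\Omega(\nabla g,\nabla g)=0$ for all Neumann $g\in C_c^\infty$'' to ``$\bRic_\Omega=0$'', but $\bRic_\Omega$ is a priori defined on gradients of all functions in ${\rm TestF}(\Omega)$; closing this requires a density-plus-continuity argument (e.g. the continuity properties of $\bRic$ from \cite{G-N} together with approximation of test gradients by the Neumann smooth class). This is the same point the paper leaves implicit when it asserts that the Neumann class is ``big enough'', but a complete proof should address it.
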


\def\cprime{$'$}

\end{document}